\theoremstyle{plain}
\newtheorem{thm}{Theorem}[section]
\newtheorem{prop}[thm]{Propsition}
\newtheorem{cor}[thm]{Corollary}
\newtheorem{lem}[thm]{Lemma}
\theoremstyle{definition}
\theoremstyle{remark}
\newcommand{\To}{\xrightarrow}
\newcommand{\rad}{\mathrm{rad}}
\newcommand{\soc}{\mathrm{soc}}
\newcommand{\Natural}{\mathbb{N}}
\begin{document}
\title[A note on resolution quivers]{A note on resolution quivers}
\author[Dawei Shen]{Dawei Shen}
\thanks{Supported by the National Natural Science Foundation of China (No. 11201446).}
\subjclass[2010]{16G20, 13E10}
\date{November 16, 2012.}
\thanks{E-mail: sdw12345$\symbol{64}$mail.ustc.edu.cn}
\keywords{resolution quiver, Nakayama algebra, left retraction}
\dedicatory{}
\commby{}
\begin{abstract}
  Recently, Ringel introduced the resolution quiver for a connected Nakayama algebra. It is known that each connected component of the resolution quiver has a unique cycle. We prove that all cycles in the resolution quiver are of the same size. We introduce the notion of weight for a cycle in the resolution quiver. It turns out that all cycles have the same weight.
\end{abstract}
\maketitle

\section{Introduction}
Let $A$ be a connected Nakayama algebra without simple projective modules. All modules are left modules of finite length. We denote the number of simple $A$-modules by $n(A)$. Let $\gamma(S)=\tau\soc{P(S)}$ for a simple $A$-module $S$ \cite{R1}, where $P(S)$ is the projective cover of $S$ and $\tau=\mathrm{DTr}$ is the Auslander-Reiten translation \cite{ARS}. Ringel \cite{R1} defined the {\em resolution quiver} $R(A)$ of $A$ as follows: the vertices correspond to simple $A$-modules and there is an arrow from $S$ to $\gamma(S)$ for each simple $A$-module $S$. The resolution quiver gives a fast algorithm to decide whether $A$ is a Gorenstein algebra or not, and whether it is CM-free or not; see \cite{R1}.

Using the map $f$ introduced in \cite{G}, the notion of resolution quiver applies to any connected Nakayama algebra. It is known that each connected component of $R(A)$ has a unique cycle.

Let A be a connected Nakayama algebra and $C$ be a cycle in $R(A)$. Assume that the vertices of $C$ are $S_1, S_2, \cdots , S_m$. We define the {\em weight} of $C$ to be $\frac{\sum_{k=1}^mc_k}{n(A)}$, where $c_k$ is the length of the projective cover of $S_k$. The aim of this note is to prove the following result.

\begin{prop} \label{prop 1.1}
Let $A$ be a connected Nakayama algebra. Then all cycles in its resolution quiver are of the same size and of the same weight.
\end{prop}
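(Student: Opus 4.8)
The plan is to work with the combinatorial model of a connected Nakayama algebra. Recall that such an algebra is determined by its Kupisch series $(c_1, c_2, \ldots, c_n)$, i.e.\ the sequence of lengths of the indecomposable projectives $P(S_i)$, indexed cyclically by $\Zahlen/n\Zahlen$. Writing $S_i$ for the simple at vertex $i$, one has $\soc P(S_i) = S_{i+c_i}$ (indices mod $n$), and hence the map $\gamma$ can be described purely arithmetically: $\gamma(S_i) = S_{f(i)}$ where $f(i) \equiv i + c_i$ up to the shift coming from $\tau$. In fact, following Gustafson's map $f$ from \cite{G}, the cleanest statement is $f(i) = i + c_i \pmod n$ on the level of vertices, so the resolution quiver is exactly the functional graph of $f$ on $\Zahlen/n\Zahlen$. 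I would first record this arithmetic description and the basic fact (stated in the excerpt) that each component has a unique cycle, which is automatic for a functional graph on a finite set.

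\textbf{The key invariant.} The crucial observation I would isolate is this: if $C$ is a cycle with vertices $S_{i_1} \to S_{i_2} \to \cdots \to S_{i_m} \to S_{i_1}$, then traversing the cycle once and summing the displacements gives
\[
  \sum_{k=1}^m c_{i_k} \;=\; \sum_{k=1}^m (i_{k+1} - i_k) \;\equiv\; 0 \pmod n,
\]
since the total displacement around a closed loop in $\Zahlen/n\Zahlen$ is a multiple of $n$. Thus the numerator $\sum_k c_{i_k}$ of the weight is automatically divisible by $n(A)$, so the weight is a positive integer — call it $w(C)$ — equal to the number of times the cycle "wraps around" $\Zahlen/n\Zahlen$. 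So the content of the proposition is that both $m$ (the size) and $w$ (the wrapping number) are independent of the chosen cycle. The plan is to show that $w(C)$ and the size $m$ are linked to a single global invariant of $A$.

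\textbf{Proving the two quantities are constant.} Here I expect to use a connectivity/covering argument. Since $A$ is connected, the ordinary quiver of $A$ is a single cycle, and I would like to pass from the resolution quiver to this underlying cyclic structure. The idea is: from any vertex $i$, iterating $f$ eventually lands in a cycle, and the "depth plus position" data should let one compare cycles in different components — but connectedness of $R(A)$ itself may fail, so instead I would argue via a fixed auxiliary quantity. A natural candidate is $N := \sum_{i=1}^n c_i$, the total dimension invariant. I would try to show: for \emph{any} cycle $C$, the size $m$ satisfies $m \mid n$ in a way that forces $m$ to equal $n/d$ where $d = \gcd$ of something intrinsic, and $w(C) = N/n \cdot (\text{correction})$ — more precisely I suspect the right statement is that $w(C) = $ (the common value) follows because the union of all cycle-vertices, together with the $f$-orbit structure, tiles $\Zahlen/n\Zahlen$ compatibly. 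Concretely: pick two cycles $C, C'$; since $R(A)$ arises from a connected algebra one can find a "path" in an appropriate sense (e.g.\ both cycles' vertex sets meet a common residue structure), and then a telescoping/averaging of the displacement sums shows $\sum c_{i_k}/m$ is the same local average along both, forcing equal weight; equality of size then follows because size $\times$ average $=$ weight $\times n$ with the average pinned down.

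\textbf{Main obstacle.} The hard part will be the comparison step between two cycles lying in \emph{different} connected components of $R(A)$ — the arithmetic displacement argument gives divisibility of each individual weight by $n$ for free, but showing the weights \emph{agree} across components genuinely uses that the original algebra $A$ is connected (equivalently, that the Kupisch series is a single cyclic sequence rather than a disjoint union). I anticipate the cleanest route is to introduce, for each $i \in \Zahlen/n\Zahlen$, the eventual behavior of the orbit $i, f(i), f^2(i), \ldots$ and prove a lemma of the form: "the wrapping number of the cycle reached from $i$ is independent of $i$," perhaps by exhibiting a monotonicity or an explicit bound $c_i \le i - (\text{something}) + n$ coming from the structure of Nakayama algebras (the inequality $c_{i+1} \ge c_i - 1$ in the Kupisch series). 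Making that lemma precise, and deducing equality of both size and weight from it, is where the real work lies; everything before it is bookkeeping with the functional graph of $f$.
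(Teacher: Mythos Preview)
Your setup and the winding-number reading of the weight are correct, but the proposal stops exactly at the hard step, and most of the ideas you float for it will not close the gap. Connectivity of $R(A)$ cannot be used: $R(A)$ is typically disconnected, and there is no path in it joining two distinct cycles along which to telescope anything. The global sum $N=\sum_i c_i$ is a red herring, since it sums over \emph{all} vertices and sees nothing about which ones are cyclic. The one genuinely usable remark is your last one, the Kupisch inequality $c_{i+1}\ge c_i-1$, but you have not said what to do with it. Here is the missing argument. Lift $f$ to $\tilde f:\Zahlen\to\Zahlen$ by $\tilde f(i)=i+c_i$ with $c_i$ extended $n$-periodically; then $\tilde f(i+n)=\tilde f(i)+n$, and the Kupisch inequality is exactly the statement that $\tilde f$ is \emph{non-decreasing}. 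If $x$ lies on a cycle of size $m_x$ and weight $w_x$ then $\tilde f^{\,m_x}(x)=x+w_x n$. For a second cyclic vertex $y$ (shifted so that $x<y<x+n$), monotonicity of $F:=\tilde f^{\,m_x m_y}$ and $F(z+n)=F(z)+n$ give $(F(y)-y)-(F(x)-x)\in(-n,n)$, and since this difference is $(m_xw_y-m_yw_x)n$ it must vanish. Now set $g:=\tilde f^{\,m_x}-w_x n$: this is non-decreasing, fixes $x$, and satisfies $g^{m_y}(y)=y$; but a non-decreasing self-map of $\Zahlen$ has no periodic orbit of length $>1$ (if $g(y)>y$ then $g^k(y)$ is non-decreasing in $k$, and dually), so $g(y)=y$, whence $m_y\mid m_x$. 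By symmetry $m_x=m_y$, and then $w_x=w_y$.

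For comparison, the paper takes an entirely different route and never analyses the dynamics of $f$ directly. It first replaces $(c_1,\dots,c_n)$ by $(c_1+n,\dots,c_n+n)$, which leaves $R(A)$ unchanged and shifts every weight by the corresponding size, so one may assume $p(A)>n(A)$ and in particular that $A$ is a cycle algebra. It then invokes the \emph{left retractions} of Chen--Ye to build a chain $A=A_0\to A_1\to\cdots\to A_r$ of connected Nakayama algebras with $n(A_{i+1})=n(A_i)-1$ and $A_r$ self-injective, and checks (Lemma~\ref{lem 2.2}) that each retraction induces a size- and weight-preserving bijection on cycles. For self-injective $A_r$ with constant admissible sequence $(c,\dots,c)$ every cycle visibly has size $n/(n,c)$ and weight $c/(n,c)$, which finishes the proof. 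The monotone-lift argument, once completed as above, is more elementary and self-contained; the paper's reduction is less direct but the retraction machinery also yields the further consequences recorded after the proof.
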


As a consequence of Proposition \ref{prop 1.1}, if the resolution quiver has a loop, then all cycles are loops; this result is obtained by Ringel \cite{R1, R2}. The proof of Proposition \ref{prop 1.1} uses {\em left retractions} of Nakayama algebras studied in \cite{CY}.

\section{The proof of Proposition 1.1}
Let $A$ be a connected Nakayama algebra. Recall that $n=n(A)$ is the number of simple $A$-modules. Let $S_1,S_2,\cdots,S_n$ be a complete set of pairwise non-isomorphic simple $A$-modules and $P_i$ be the projective cover of $S_i$. We require that $\rad{P_i}$ is a factor module of $P_{i+1}$. Here,  we identify $n+1$ with $1$.

Recall that $\mathbf{c}(A)=(c_1,c_2,\cdots,c_n)$ is an {\em admissible sequence} for $A$, where $c_i$ is the length of $P_i$; see \cite[Chapter IV. 2]{ARS}. We denote $p(A)=\min\{c_1,c_2,\cdots,c_n\}$. The algebra $A$ is called a {\em line algebra} if $c_n=1$ or, equivalently, the valued quiver of $A$ is a line; otherwise, $A$ is called a {\em cycle algebra} or, equivalently, the valued quiver of $A$ is a cycle. Then $A$ is a cycle algebra if and only if $A$ has no simple projective modules.

Following \cite{G}, we introduce a map $f_A:\{1,2,\cdots,n\}\to\{1,2,\cdots,n\}$ such that $n$ divides $f_A(i)-(c_i+i)$ for $1\leq{i}\leq n$. The {\em resolution quiver} $R(A)$ of $A$ is defined as follows:  its vertices are $1,2,\cdots,n$ and there is an arrow from $i$ to $f_A(i)$. Observe that for a cycle algebra $A$ we have $\gamma(S_i)=S_{f_A(i)}$. Then by identifying $i$ with $S_i$, the resolution quiver $R(A)$ coincides with that in \cite{R1}.

Assume that $A$ is a cycle algebra which is not self-injective. After possible cyclic permutations, we may assume that its admissible sequence $\mathbf{c}(A)=(c_1,c_2,\cdots,c_n)$ is {\em normalized} \cite{CY}, that is, $p(A)=c_1=c_n-1$. Recall from \cite{CY} that there is an algebra homomorphism $\eta:A \to L(A)$ with $L(A)$ a connected Nakayama algebra such that its admissible sequence $\mathbf{c}(L(A)) = (c_1',c_2',\cdots,c_{n-1}')$ is given by $c_i'=c_i-[\frac{c_i+i-1}{n}]$ for $1\leq i\leq n-1$; in particular, $n(L(A))=n(A)-1$. Here, for a real number $x$, $[x]$ denotes the largest integer not greater than $x$.  The algebra homomorphism $\eta$ is called the {\em left retraction} \cite{CY} of $A$ with respect to $S_n$.

We introduce a map $\pi:\{1,2,\cdots,n\}\to\{1,2,\cdots,n-1\}$ such that $\pi(i)=i$ for $i<n$ and $\pi(n)=1$. The following result is contained in the proof of \cite[Lemma 3.7]{CY}.

\begin{lem} \label{lem 2.1}
Let $A$ be a cycle algebra which is not self-injective. Then $\pi f_A(i)=f_{L(A)}\pi(i)$ for $1\leq i\leq n$.
\end{lem}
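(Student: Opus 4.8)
We aim to prove Lemma \ref{lem 2.1}, i.e.\ the identity $\pi f_A(i)=f_{L(A)}\pi(i)$ for a cycle algebra $A$ that is not self-injective (so $n\geq 2$).

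The plan is to reduce the identity to a single congruence modulo $n-1$, which can then be checked by a direct computation, treating the value $i=n$ separately. Two elementary observations drive everything. First, for every $k\in\{1,\dots,n\}$ one has $\pi(k)\in\{1,\dots,n-1\}$ and $\pi(k)\equiv k\pmod{n-1}$: this is trivial for $k<n$, and for $k=n$ it holds because $\pi(n)=1\equiv n\pmod{n-1}$. Second, by the definition of the map $f$ applied to $L(A)$, the value $f_{L(A)}(j)$ is exactly the unique representative in $\{1,\dots,n-1\}$ of $c_j'+j$ modulo $n-1$. Granting a congruence of the form $f_A(i)\equiv c_{\pi(i)}'+\pi(i)\pmod{n-1}$ for all $i\in\{1,\dots,n\}$, the first observation applied to $k=f_A(i)$ gives $\pi f_A(i)\in\{1,\dots,n-1\}$ with $\pi f_A(i)\equiv c_{\pi(i)}'+\pi(i)\pmod{n-1}$, and then the second observation identifies $\pi f_A(i)$ with $f_{L(A)}\pi(i)$. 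So everything reduces to establishing that one congruence.

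Writing $q_i=[\frac{c_i+i-1}{n}]$, one has $f_A(i)=c_i+i-n\,q_i$ (the unique element of $\{1,\dots,n\}$ congruent to $c_i+i$ modulo $n$). Reducing modulo $n-1$ and using $n\equiv 1$ gives $f_A(i)\equiv c_i+i-q_i\pmod{n-1}$. If $i<n$, then $\pi(i)=i$, and directly from the formula $c_i'=c_i-[\frac{c_i+i-1}{n}]$ we get $c_i+i-q_i=c_i'+i=c_{\pi(i)}'+\pi(i)$, which settles this case. If $i=n$, then $\pi(n)=1$, and the normalization $c_n=c_1+1$ is exactly what is needed: it gives $q_n=[\frac{c_n+n-1}{n}]=[\frac{c_1+n}{n}]=[\frac{c_1}{n}]+1$, so that $c_n+n-q_n=c_1+n-[\frac{c_1}{n}]\equiv(c_1-[\frac{c_1}{n}])+1\pmod{n-1}$; since $c_1'=c_1-[\frac{c_1+1-1}{n}]=c_1-[\frac{c_1}{n}]$, this last expression equals $c_1'+1=c_{\pi(n)}'+\pi(n)$. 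Hence the congruence holds for every $i$, which proves the lemma.

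The underlying computation is short, so the main thing to watch is the bookkeeping: one should reduce modulo $n-1$ rather than modulo $n$ (so that the correction term $n\,q_i$ collapses to $q_i$), one must keep in mind that $\pi$ acts as the identity only after passing to residues modulo $n-1$, and one must remember to invoke the normalization $c_n=c_1+1$ precisely in the case $i=n$. As already remarked, the statement is also contained in the proof of \cite[Lemma 3.7]{CY}, so it could alternatively just be cited.
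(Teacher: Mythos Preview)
Your proof is correct and follows essentially the same approach as the paper's: a direct case split $i<n$ versus $i=n$, using the formula $c_i'=c_i-[\frac{c_i+i-1}{n}]$ in the first case and the normalization $c_n=c_1+1$ in the second. The only cosmetic difference is that the paper writes $c_i+i=kn+j$ explicitly and shows $c_{\pi(i)}'+i=k(n-1)+j$ (and similarly with $i$ replaced by $n$), whereas you phrase the same arithmetic as a congruence modulo $n-1$; the underlying computation is identical.
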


\begin{proof}
Let $c_i+i=kn+j$ with $k\in\Natural$ and $1\leq j\leq n$. In particular, $f_A(i)=j$.

For $i<n$, we have
\begin{equation} \label{eq 1}
c_{\pi(i)}^\prime+i=c_i+i-\bigg[\frac{c_i+i-1}{n}\bigg]=kn+j-\bigg[\frac{kn+j-1}{n}\bigg]=k(n-1)+j.
\end{equation}
Then $\pi f_A(i)=\pi(j)$ and $f_{L(A)}\pi(i)=f_{L(A)}(i)=\pi(j)$.

For $i=n$, we have
\begin{equation} \label{eq 2}
c_{\pi(n)}^\prime+n=c_n-1+n-\bigg[\frac{c_n-1}{n}\bigg]=kn+j-1-\bigg[\frac{kn+j-n-1}{n}\bigg]=k(n-1)+j.
\end{equation}
Then $\pi f_A(n)=\pi(j)$ and $f_{L(A)}\pi(n)=f_{L(A)}(1)=\pi(j)$.
\end{proof}

The previous lemma gives rise to a unique morphism of resolution quivers
\[
\tilde\pi:R(A)\longrightarrow R(L(A))
\]
such that $\tilde\pi(i)=\pi(i)$. Then $\tilde\pi$ sends the unique arrow from $i$ to $f_A(i)$ to the unique arrow in $R(L(A))$ from $\pi(i)$ to $f_{L(A)}\pi(i)=\pi f_A(i)$. The morphism $\tilde\pi$ identifies the vertices $1$ and $n$ as well as the arrows starting from $1$ and $n$. Because $1$ and $n$ are in the same connected component of $R(A)$, we infer that $R(A)$ and $R(L(A))$ have the same number of connected components.

Let $A$ be a connected Nakayama algebra and $C$ be a cycle in $R(A)$. The {\em size} of $C$ is the number of vertices in $C$. We recall that the {\em weight} of $C$ is given by $w(C)=\frac{\sum_{k}c_k}{n(A)}$, where $k$ runs over all vertices in $C$. We mention that $w(C)$ is an integer; see \eqref{eq 3}. A vertex $x$ in $R(A)$ is said to be {\em cyclic} provided that $x$ belongs to a cycle.

\begin{lem} \label{lem 2.2}
Let $A$ be a cycle algebra which is not self-injective. Then $\tilde\pi$ induces a bijection between the set of cycles in $R(A)$ and the set of cycles in $R(L(A))$, which preserves sizes and weights.
\end{lem}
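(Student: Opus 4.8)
The plan is to show that $\tilde\pi$ restricts to a bijection on cyclic vertices and then carry sizes and weights across. First I would verify that a vertex $x$ of $R(A)$ is cyclic if and only if $\pi(x)$ is cyclic in $R(L(A))$. One direction is immediate from Lemma~\ref{lem 2.1}: if $f_A^s(x)=x$ then $f_{L(A)}^s\pi(x)=\pi f_A^s(x)=\pi(x)$, so $\pi$ sends cyclic vertices to cyclic vertices. For the converse, recall that in any $R(B)$ a vertex lies on a cycle precisely when it has a preimage under some power of $f_B$ that returns to it; since every connected component of $R(L(A))$ has a unique cycle, it suffices to note that $\tilde\pi$ is surjective on vertices and maps components onto components, so the preimage of the cycle in a component of $R(L(A))$ must meet the cycle of the corresponding component of $R(A)$. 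The only subtlety is the identified pair $\{1,n\}$: I would check that $1$ is cyclic in $R(A)$ if and only if $n$ is (they have the same out-arrow, so they lie in the same component and $f_A(1)=f_A(n)$ forces them to be simultaneously on or off the unique cycle of that component).

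Next I would establish that $\tilde\pi$ is injective on cyclic vertices. The only collision $\tilde\pi$ creates is $1\leftrightarrow n$, so I must rule out that both $1$ and $n$ are cyclic. If both were cyclic they would lie on the same cycle $C$ (unique in their common component), and since $f_A(1)=f_A(n)=:j$, traversing $C$ from $j$ back around would have to reach both $1$ and $n$ as the ``predecessor'' of $j$ — but a cycle has a unique predecessor of each vertex, contradiction, unless $1=n$, i.e. $n=1$, which is excluded. Hence on each cycle of $R(A)$ at most one of $1,n$ appears, $\tilde\pi$ is injective there, and combined with surjectivity onto cycles we get the claimed bijection between cycles of $R(A)$ and cycles of $R(L(A))$. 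Injectivity on a cycle also shows the bijection preserves \emph{size}: a cycle $C$ in $R(A)$ and its image $\tilde\pi(C)$ have the same number of vertices because $\tilde\pi|_C$ is a bijection of vertex sets intertwining $f_A$ and $f_{L(A)}$.

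For \emph{weights}, let $C$ be a cycle in $R(A)$ with vertex set $V$, so $\tilde\pi(C)$ has vertex set $\pi(V)$. I would compute $\sum_{i\in\pi(V)}c'_i$ using $c'_i=c_i-[\frac{c_i+i-1}{n}]$ and the formulas \eqref{eq 1} and \eqref{eq 2} from the proof of Lemma~\ref{lem 2.1}: for each $i\in V$, writing $c_i+i=k_in+j_i$, the quantity subtracted is exactly $k_i$ (with the $i=n$ case giving the same $k_i$ by \eqref{eq 2}), so $\sum_{i\in V} c'_{\pi(i)} = \sum_{i\in V} c_i - \sum_{i\in V} k_i$. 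The key arithmetic identity is that $\sum_{i\in V} k_i$ equals the weight $w(C)$ itself: summing $c_i+i\equiv f_A(i)\pmod n$ around the cycle, the $i$'s and $f_A(i)$'s run over the same set $V$ and cancel, leaving $\sum_{i\in V} c_i = n\sum_{i\in V} k_i$, i.e. $\sum_{i\in V} k_i = \frac{\sum_{i\in V} c_i}{n} = w(C)$ (this also re-proves that $w(C)\in\Zahlen$, cf. the reference to \eqref{eq 3}). Therefore $\sum_{i\in\pi(V)} c'_i = \sum_{i\in V} c_i - w(C) = w(C)\cdot n - w(C) = w(C)(n-1) = w(C)\cdot n(L(A))$, so $w(\tilde\pi(C)) = w(C)$.

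The main obstacle I anticipate is the bookkeeping around the identified vertices $1$ and $n$: ensuring that the ``at most one of $1,n$ is cyclic'' claim is airtight (using uniqueness of the cycle in each component and uniqueness of predecessors along a cycle), and that this is exactly what makes $\tilde\pi$ both injective on cyclic vertices and size-preserving. The weight computation, once one spots that $\sum_{i\in V} k_i = w(C)$ via the telescoping of $i$ and $f_A(i)$ over the cycle, is then a short calculation; everything else is routine transport along the morphism $\tilde\pi$ of Lemma~\ref{lem 2.1}.
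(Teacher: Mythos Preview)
Your overall strategy matches the paper's: show that at most one of $1,n$ is cyclic (so $\tilde\pi$ is injective on each cycle), use the equality of the number of connected components to get surjectivity onto cycles, and then compute weights via the identity $\sum_{i\in V}k_i=w(C)$. Your second and third paragraphs carry this out correctly and essentially coincide with the paper's argument.

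However, your first paragraph contains a genuine error. You assert that ``$1$ is cyclic in $R(A)$ if and only if $n$ is'', arguing that $f_A(1)=f_A(n)$ forces them to be simultaneously on or off the cycle. This is false, and in fact the opposite of what you prove (correctly) in your second paragraph: having the same successor means at \emph{most} one of them can lie on a cycle, since a cycle vertex has a unique predecessor. It is perfectly possible that, say, $1$ is cyclic and $n$ is a non-cyclic vertex feeding into the cycle at $f_A(n)=f_A(1)$. Consequently the stronger statement you aim for in that paragraph, ``$x$ is cyclic iff $\pi(x)$ is cyclic'', is also false: in the situation just described, $n$ is not cyclic but $\pi(n)=1$ is cyclic in $R(L(A))$. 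So the plan to get a bijection on \emph{cyclic vertices} does not work as stated.

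The fix is simply to drop that paragraph and argue, as the paper does and as you effectively do afterwards, at the level of \emph{cycles}: from ``not both $1$ and $n$ are cyclic'' you get that $\tilde\pi$ restricted to any cycle is injective, hence sends cycles to cycles of the same size; injectivity of the induced map on cycles is then clear, and surjectivity follows from the already-noted fact that $R(A)$ and $R(L(A))$ have the same number of connected components (hence the same number of cycles). Your weight computation is correct and is exactly the paper's.
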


\begin{proof}
We observe that for two vertices $x$ and $y$ in $R(A)$, $\pi(x)=\pi(y)$ if and only if $x=y$ or $\{x,y\}=\{1,n\}$. Note that $f_A(1)=f_A(n)$. So the vertices $1$ and $n$ are in the same connected component of $R(A)$ and they are not cyclic at the same time.

Let $C$ be a cycle in $R(A)$ with vertices $x_1,x_2,\cdots,x_s$ such that $x_{i+1}=f_A(x_i)$. Here, we identify $s+1$ with $1$. Since the vertices $1$ and $n$ are not cyclic at the same time, we have that $\pi(x_1),\pi(x_2),\cdots,\pi(x_s)$ are pairwise distinct and $\tilde{\pi}(C)$ is a cycle in $R(L(A))$. Hence $\tilde\pi$ induces a map from the set of cycles in $R(A)$ to the set of cycles in $R(L(A))$. Obviously the map is injective. On the other hand, recall that $R(L(A))$ and $R(A)$ have the same number of connected components, thus they have the same number of cycles. Hence $\tilde\pi$ induces a bijection between the set of cycles in $R(A)$ and the set of cycles in $R(L(A))$ which preserves sizes.

It remains to prove that $w(C)=w(\tilde\pi(C))$. We assume that $c_{x_i}+x_i=k_i n+x_{i+1}$ with $k_i\in\Natural$. Then we have
\begin{equation} \label{eq 3}
w(C)=\frac{\sum_{i=1}^sc_{x_i}}{n}=\sum_{i=1}^s{k_i}.
\end{equation}
Recall that $n(L(A))=n-1$. We note that $c_{\pi(x_i)}^\prime+x_i= k_i(n-1)+x_{i+1}$; see \eqref{eq 1} and \eqref{eq 2}. Hence $\sum_{i=1}^sc_{\pi(x_i)}'=(n-1)\sum_{i=1}^sk_i$  and the assertion follows.
\end{proof}

Recall from \cite[Theorem 3.8]{CY} that there exists a sequence of algebra homomorphisms
\begin{equation} \label{eq 4}
A=A_0\To{\eta_0}A_1\To{\eta_1}A_2\to\cdots\to A_{r-1}\To{\eta_{r-1}}A_r
\end{equation}
such that each $A_i$ is a connected Nakayama algebra, $\eta_i:A_i\to A_{i+1}$ is a left retraction and $A_r$ is self-injective.

We now prove Proposition \ref{prop 1.1}.

\begin{proof}[\bf Proof of Proposition \ref{prop 1.1}:]
Assume that $A$ is a connected self-injective Nakayama algebra with $n(A)=n$ and  admissible sequence $\mathbf{c}(A)=(c,c,\cdots,c)$. Then a direct calculation shows that $R(A)$ consists entirely of cycles and each cycle is of size $\frac{n}{(n,c)}$ and of weight $\frac{c}{(n,c)}$, where $(n,c)$ is the greatest common divisor of $n$ and $c$. In particular, all cycles in $R(A)$ are of the same size and of the same weight.

In general, let $A$ be a connected Nakayama algebra whose admissible sequence is $\mathbf{c}(A)=(c_1,c_2,\cdots,c_n)$. Take $A'$ to be a connected Nakayama algebra with admissible sequence $\mathbf{c}(A')=(c_1+n,c_2+n,\cdots,c_n+n)$. Then $R(A)=R(A')$ and for any cycle $C$ in $R(A)$, the corresponding cycle $C'$ in $R(A')$ satisfies $w(C')=w(C)+s(C)$, where $s(C)$ denotes the size of $C$. The statement for $A$ holds if and only if it holds for $A'$.

We now assume that $A$ is a connected Nakayama algebra with $p(A)>n(A)$. One proves by induction that each $A_i$ in the sequence \eqref{eq 4} satisfies $p(A_i)>n(A_i)$. In particular, each $A_i$ is a cycle algebra. We can apply Lemma \ref{lem 2.2} repeatedly. Then the statement for $A$ follows from the statement for the self-injective Nakayama algebra $A_r$, which is already proved above.
\end{proof}

We conclude this note with a consequence of the above proof.
\begin{cor}
Let $A$ be a connected Nakayama algebra of infinite global dimension. Then we have the following statements.

$\mathrm{(1)}$ The number of cyclic vertices of the resolution quiver $R(A)$ equals the number of simple $A$-modules of infinite projective dimension.

$\mathrm{(2)}$ The number of simple $A$-modules of infinite projective dimension equals the number of simple $A$-modules of infinite injective dimension.
\end{cor}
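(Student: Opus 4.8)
The plan is to extract both statements from the reduction already carried out in the proof of Proposition~\ref{prop 1.1}, by following through the left retractions two further quantities: the number of cyclic vertices of the resolution quiver, and the number of simple modules of infinite projective dimension. Note first that a connected Nakayama algebra of infinite global dimension has no simple projective module, since a line algebra has a directed quiver and hence finite global dimension; so it is a cycle algebra. The self-injective case is clear: if $A$ is self-injective with $\mathbf{c}(A)=(c,\dots,c)$ and $c\geq 2$, then by the computation in the proof of Proposition~\ref{prop 1.1} every vertex of $R(A)$ is cyclic, while every simple module has infinite projective (and infinite injective) dimension, because over a self-injective algebra a module of finite projective dimension is projective and $c\geq 2$ makes the simples non-projective. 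So it will suffice to show that the two counts above are invariant under a single left retraction $\eta\colon A\to L(A)$; feeding this into the sequence \eqref{eq 4}, one then also sees inductively that each $A_i$ has infinite global dimension, hence is a cycle algebra to which Lemma~\ref{lem 2.2} applies, with $A_r$ self-injective and not semisimple.

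That the number of cyclic vertices is preserved is immediate from Lemma~\ref{lem 2.2}, which matches the cycles of $R(A)$ with those of $R(L(A))$ preserving sizes. For the simple modules of infinite projective dimension, normalize $\mathbf{c}(A)$ so that $c_1=p(A)=c_n-1$; then $\Omega S_n=\rad P_n$ is uniserial of length $c_n-1=c_1$ with top $S_1$, hence $\Omega S_n=P_1$ and $\mathrm{pd}_A S_n=1$. So $S_n$ never contributes, and since $\pi$ restricts to the identity on $\{1,\dots,n-1\}$ it remains to prove: for $1\leq i\leq n-1$, the projective dimension of $S_i$ over $A$ is finite if and only if that of $S_i$ over $L(A)$ is finite. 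In particular this shows that whenever $A$ has infinite global dimension so does $L(A)$, which is what closes the induction, and statement~(1) follows for every connected Nakayama algebra of infinite global dimension.

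This last equivalence is the heart of the matter, and the main obstacle. My approach would be to compare the minimal projective resolutions of $S_i$ over $A$ and over $L(A)$ step by step: by the computation in the proof of Lemma~\ref{lem 2.1}, the tops of the successive syzygies of $S_i$ over the two algebras correspond under $\pi$ (for the even syzygies this is exactly $\pi f_A=f_{L(A)}\pi$), and one then has to check, using the formula $c_i'=c_i-[\frac{c_i+i-1}{n}]$ to control the lengths, that the two resolutions reach a projective module at the same step, or else that neither of them ever does. A more conceptual route, if such a result is available, would be to invoke that the left retraction induces an equivalence between the singularity categories of $A$ and of $L(A)$ carrying $S_i$ to $S_i$ for $i<n$ and $S_n$ to zero, since a module has finite projective dimension precisely when it vanishes in the singularity category; this would settle the equivalence at once.

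Finally, statement~(2) reduces to (1) via the $k$-duality $D=\Hom_k(-,k)$: $A^{\mathrm{op}}$ is again a connected Nakayama algebra, of the same (infinite) global dimension, and $\mathrm{id}_A S=\mathrm{pd}_{A^{\mathrm{op}}}DS$ for every simple module $S$, so the number of simple $A$-modules of infinite injective dimension equals the number of simple $A^{\mathrm{op}}$-modules of infinite projective dimension, which by (1) is the number of cyclic vertices of $R(A^{\mathrm{op}})$. Statement~(2) thus amounts to the assertion that $R(A)$ and $R(A^{\mathrm{op}})$ have the same number of cyclic vertices; this I would prove by running the argument of (1) for $A^{\mathrm{op}}$ as well — so that its number of cyclic vertices equals the number of simples of the self-injective algebra reached by left-retracting $A^{\mathrm{op}}$ — and then identifying this last number with the corresponding one for $A$ through the explicit passage to the co-Kupisch series $\mathbf{c}(A^{\mathrm{op}})$; equivalently, one may prove directly the dual of the equivalence above, namely that the number of simple modules of infinite injective dimension is itself invariant under left retraction. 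In every case the single real difficulty is the behavior of homological dimensions under one left retraction.
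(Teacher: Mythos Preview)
Your plan for (1) is the same skeleton as the paper's: push both counts along the chain of left retractions down to the self-injective endpoint. The difference is that the paper does not try to prove the step you correctly flag as ``the main obstacle''; it simply cites it. Concretely, \cite[Lemma~2.4]{CY} gives that every $A_i$ in the sequence~\eqref{eq 4} still has infinite global dimension (hence is a cycle algebra, so Lemma~\ref{lem 2.2} applies at each stage), and \cite[Theorem~3.8]{CY} gives that $n(A_r)$ equals $n(A)$ minus the number of simple $A$-modules of finite projective dimension. With these two inputs, (1) is immediate from Lemma~\ref{lem 2.2} together with the observation that every vertex of $R(A_r)$ is cyclic. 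Your syzygy-by-syzygy comparison, or the singularity-category equivalence you allude to, would amount to reproving those cited results; as written, your proposal leaves that genuinely open.

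For (2) the paper does something much shorter and quite different from your duality detour: it invokes \cite[Corollary~3.6]{M}, which says that a simple $A$-module is a cyclic vertex of $R(A)$ if and only if it has infinite injective dimension. Then (2) drops out of (1) in one line. Your route via $A^{\mathrm{op}}$ reduces (2) to the equality of the number of cyclic vertices in $R(A)$ and in $R(A^{\mathrm{op}})$, and your suggested ways of establishing that (matching $n(A_r)$ with $n((A^{\mathrm{op}})_{r'})$, or showing invariance of the count of simples of infinite injective dimension under a left retraction) are again left as tasks rather than carried out. None of this is wrong in principle, but it is substantially more work than the paper's single citation.
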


\begin{proof}
$\mathrm{(1)}$ All the algebras $A_i$ in the sequence \eqref{eq 4} have infinite global dimension; see \cite[Lemma 2.4]{CY}. In particular, they are cycle algebras. We apply Lemma \ref{lem 2.2} repeatedly and obtain a bijection between the set of cyclic vertices of $R(A)$ and the set of cyclic vertices of $R(A_r)$. Recall that all vertices of $R(A_r)$ are cyclic, and $n(A_r)$ equals $n(A)$ minus the number of simple $A$-modules of finite projective dimension; see \cite[Theorem 3.8]{CY}. Then the statement follows immediately.

$\mathrm{(2)}$ Recall from \cite[Corollary 3.6]{M} that a simple $A$-module $S$ is cyclic in $R(A)$ if and only if $S$ has infinite injective dimension. Then $\mathrm{(2)}$ follows from $\mathrm{(1)}$.
\end{proof}

\section*{Acknowledgements}
The author thanks his supervisor Professor Xiao-Wu Chen for his guidance and Professor Claus Michael Ringel for his encouragement.

\vskip 10pt

{\footnotesize \noindent Dawei Shen \\
School of Mathematical Sciences, University of Science and Technology of China, Hefei, Anhui 230026, P. R. China \\
URL: http://home.ustc.edu.cn/$^\sim$sdw12345}
\end{document}